\newcommand{\dd}{\mathrm{d}}
\newcommand{\E}{\mathbb{E}}
\newcommand{\R}{\mathbb{R}}
\newcommand{\p}[1]{\mathbb{P}\left( #1 \right)}
\DeclareMathOperator{\sgn}{sgn}
\newtheorem{theorem}{Theorem}
\newtheorem{lemma}[theorem]{Lemma}
\newtheorem{corollary}[theorem]{Corollary}
\theoremstyle{remark}
\newtheorem{remark}[theorem]{Remark}
\theoremstyle{definition}
\title{\vspace{-3em}Sharp Khinchin-type inequalities for symmetric discrete uniform random variables}
\author{Alex Havrilla\thanks{Carnegie Mellon University; Pittsburgh, PA 15213, USA. Email: alumhavr@andrew.cmu.edu}
\ and 
Tomasz Tkocz\thanks{Carnegie Mellon University; Pittsburgh, PA 15213, USA. Email: ttkocz@math.cmu.edu. Research supported in part by the Collaboration Grants from the Simons Foundation and NSF grant DMS-1955175.}
}
\date{15th October 2020}
\begin{document}

\maketitle

\begin{abstract}
We establish several optimal moment comparison inequalities (Khinchin-type inequalities) for weighted sums of independent identically distributed symmetric discrete random variables which are uniform on sets of consecutive integers. Specifically, we obtain sharp constants for the second moment and any moment of order at least $3$ (using convex dominance by Gaussian random variables). In the case of only $3$ atoms, we also establish a Schur-convexity result. For moments of order less than $2$, we get sharp constants in two cases by exploiting Haagerup's arguments for random signs.
\end{abstract}

\bigskip

\begin{footnotesize}
\noindent {\em 2010 Mathematics Subject Classification.} Primary 60E15; Secondary 26D15.

\noindent {\em Key words.} Khinchin inequality, moment comparison, convex ordering, majorisation, Schur convexity
\end{footnotesize}

\bigskip

\section{Introduction}

The classical Khinchin inequality asserts that all moments of weighted sums of independent random signs are comparable (see \cite{Khi}). More specifically, if we consider independent random signs $\varepsilon_1, \varepsilon_2, \ldots$, the probability of each $\varepsilon_i$ taking the value $\pm 1$ is a half and form a weighted sum $S = \sum_{i=1}^n a_i\varepsilon_i$ with real coefficients $a_i$, then for every $p, q > 0$, there is a positive constant $C_{p,q}$ independent of $n$ and the $a_i$ such that
\begin{equation}\label{eq:CK}
\|S\|_p \leq C_{p,q}\|S\|_q.
\end{equation}
As usual, $\|X\|_p = (\E|X|^p)^{1/p}$ denotes the $p$-th moment of a random variable $X$. Moment comparison inequalities like this one are well understood up to universal constants in a great generality due to Lata\l a's formula from \cite{Lat-mom}. They have found numerous applications in classical results in analysis (for example in the proof of the Littlewood-Payley decomposition or Grothendieck's inequality) and, especially their extensions to vector valued settings (Kahane's inequalities), have been widely used in (local) theory of Banach spaces (see \cite{LT}, \cite{MSch}). One of the major challenges is to find the best constants $C_{p,q}$, which has attracted considerable attention and has important applications (for instance in geometry, $C_{2,1}$ is directly linked with the maximum volume projections of the $n$\nobreakdash-dimensional cross-polytope onto $n-1$ dimensional subspaces, see \cite{Ball, BN}). 
Besides, attacking sharp inequalities forces us to uncover often deep and effective mechanisms explaning  \emph{bigger pictures} and providing insights as to why certain inequalities are true.

For results concerning the best constant $C_{p,q}$ in the classical Khinchin inequality \eqref{eq:CK}, we mention in passing works \cite{Eat, Haa, Kom, LO-best, Mo, NO, NP, koles-b, Pin, Sz, Whi, Y}, highlighting only that the optimal value of $C_{p,q}$ is known when $p < q$ (trivial), either $p$ or $q$ is $2$, or both $p$ and $q$ are even.  
There have been only a handful of results concering random variables other than random signs. They involve continuous random variables uniformly distributed on symmetric intervals and generalisations for random vectors uniformly distributed on Euclidean spheres and balls (see \cite{BC, Kon, KK, LO}), mixtures of centred Gaussians (see \cite{AH, ENT1}), the so-called \emph{exponential family} given by the density $e^{-|x|^\alpha}$ and uniform distributions on unit $\ell_\alpha^n$ balls (see \cite{BGMN, ENT1, ENT2}), dependent random signs (see \cite{PS, Spe}), as well as general random variables via their \emph{spectral properties}  (see \cite{KLO,koles}). 
 
This paper concerns Khinchin-type inequalitites with sharp constant for symmetric discrete random variables, generalising random signs by allowing more than just two atoms. Specifically, in the simplest case, let $L$ be a positive integer and let $X$ be uniform on the set $\{-L,\ldots,-1\}\cup\{1,\ldots,L\}$. What are best constants in moment comparison inequalities for weighted sums of independent copies of $X$? Note that the following two  \emph{extreme} cases have been understood: when $L=1$, $X$ is a symmetric random sign discussed above, whereas when $L \to \infty$, $X/L$ converges in distribution to a random variable uniform on $[-1,1]$, the case analysed in \cite{LO}.

We present our results in the next section and then proceed with their proofs in their order of statement. We say that a random variable $X$ is \emph{symmetric} if $-X$ has the same distribution as $X$, equivalently $\varepsilon X$ and $\varepsilon|X|$ have the same distribution as $X$, where $\varepsilon$ is an independent \emph{symmetric random sign}, that is $\p{\varepsilon = -1} = \p{\varepsilon = 1} = \frac{1}{2}$. We usually denote by $G$ a standard Gaussian random variable, that is a real-valued random variable with density $\frac{1}{\sqrt{2\pi}}e^{-x^2/2}$. For $p > 0$, we have $\|G\|_p = 2^{1/2}\left(\pi^{-1/2}\Gamma\left((p+1)/2\right)\right)^{1/p}$, where $\Gamma$ stands for the gamma function. 

\paragraph{Acknowledgements.} 
We are indebted to Krzysztof Oleszkiewicz for his help and valuable feedback.

\section{Results}

Given $\rho_0 \in [0,1]$ and a positive integer $L$, consider a random variable $X$ with
\begin{equation}\label{eq:def-X}
\p{X = 0} = \rho_0 \quad \text{and} \quad \p{X = j} = \p{X = -j} = \frac{1-\rho_0}{2L} \quad \text{for } j = 1, \ldots, L.
\end{equation}
For $a = (a_1, \ldots, a_n) \in \R^n$ and $p \geq 1$, we let
\begin{equation}\label{eq:def-N}
N_p(a) = \left\|\sum_{i=1}^n a_iX_i\right\|_p,
\end{equation}
where $X_1, X_2, \ldots$ are i.i.d. copies of $X$. Throughout, $G$ stands for a standard Gaussian random variable. We refer to the classical monograph \cite{HLP}, or to \cite{Bh} for a concise exposition of majorisation and Schur-convexity. Our main results are as follows.

\begin{theorem}\label{thm:Schur}
Let $\rho_0 \in [0,\frac{1}{2}]$ and $L = 1$. If $p \geq 3$, then the function
$
(a_1,\ldots,a_n) \mapsto N_p(\sqrt{a_1},\ldots,\sqrt{a_n})
$
is Schur-concave on $[0,+\infty)^n$.
\end{theorem}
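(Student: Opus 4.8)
The plan is to verify the Schur--Ostrowski criterion for $F(a)=\E\bigl|\sum_{i=1}^n\sqrt{a_i}X_i\bigr|^p=N_p(\sqrt{a_1},\dots,\sqrt{a_n})^p$. This $F$ is symmetric, and since $p\ge 3$ it is of class $C^1$ on $(0,\infty)^n$ (one may differentiate under the expectation because $x\mapsto|x|^p$ is $C^1$ and the integrand has a locally bounded gradient); once $F$ is shown Schur-concave on $(0,\infty)^n$ it is so on $[0,\infty)^n$ by continuity, and then $N_p(\sqrt{\,\cdot\,})=F^{1/p}$ is Schur-concave because $t\mapsto t^{1/p}$ is increasing. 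So it suffices to show $(a_1-a_2)(\partial_1F-\partial_2F)\le 0$, and by symmetry only the pair of indices $1,2$ matters. Fix $a_3,\dots,a_n$, put $R=\sum_{i\ge 3}\sqrt{a_i}X_i$ (independent, symmetric) and $\rho=1-\rho_0$, and---this is where $L=1$ is used---write $X_i=\varepsilon_i\delta_i$ with $\varepsilon_i$ i.i.d.\ symmetric signs and $\delta_i$ i.i.d.\ Bernoulli$(\rho)$, all independent. With $\psi(x)=|x|^{p-1}\sgn x$ and $S=\sqrt{a_1}X_1+\sqrt{a_2}X_2+R$ one has $\partial_1F=\frac{p}{2\sqrt{a_1}}\E[\psi(S)X_1]$; conditioning on $\delta_1$ kills the event $\{\delta_1=0\}$, and averaging over $\varepsilon_1$ gives $\partial_1F=\frac{p\rho}{2}\,I\!\left(\sqrt{a_1},\sqrt{a_2}X_2+R\right)$, where $I(b,U):=\frac1{2b}\E[\psi(U+b)-\psi(U-b)]$, and symmetrically for $\partial_2F$. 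Hence, writing $b=\sqrt{a_1}\ge c=\sqrt{a_2}$, the theorem reduces to
\begin{equation}
I(c,\,bX+R)\ \ge\ I(b,\,cX+R)\qquad(0\le c\le b),
\tag{$\star$}
\end{equation}
for our $X$ (with $\rho_0\le\tfrac12$) and any independent symmetric $R$.

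To handle $(\star)$ I would linearise, using $\psi'(x)=(p-1)|x|^{p-2}$, to get $I(b,U)=\frac{p-1}{2b}\int_{-b}^{b}\E|U+t|^{p-2}\,\dd t$, and set $h(s):=\E|R+s|^{p-2}$. \emph{This is exactly where $p\ge 3$ is used}: then $x\mapsto|x|^{p-2}$ is convex, so $h$ is convex, and $h$ is even since $R$ is symmetric. Expanding $bX+R=b\varepsilon\delta+R$ over $\delta$ and using evenness of $h$, a short computation gives
\[
I(c,bX+R)=(p-1)\bigl[\rho_0\,\mathrm{avg}_{[-c,c]}h+\rho\,\mathrm{avg}_{[b-c,b+c]}h\bigr],\qquad
I(b,cX+R)=(p-1)\bigl[\rho_0\,\mathrm{avg}_{[-b,b]}h+\rho\,\mathrm{avg}_{[c-b,c+b]}h\bigr],
\]
where $\mathrm{avg}_{[u,v]}h$ denotes the average of $h$ over $[u,v]$. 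So $(\star)$ becomes: for every even convex $h$ and $0\le c\le b$,
\begin{equation}
\rho_0\,\mathrm{avg}_{[-c,c]}h+\rho\,\mathrm{avg}_{[b-c,b+c]}h\ \ge\ \rho_0\,\mathrm{avg}_{[-b,b]}h+\rho\,\mathrm{avg}_{[c-b,c+b]}h ,
\tag{$\dagger$}
\end{equation}
and the hypothesis $\rho_0\le\tfrac12$ enters only through $\rho\ge\rho_0$.

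Because $h$ is even convex, hence nondecreasing in $|t|$, we have $\mathrm{avg}_{[-b,b]}h\ge\mathrm{avg}_{[-c,c]}h$; and since $[b-c,b+c]\subseteq[c-b,c+b]$ while the extra piece $[-(b-c),b-c]$ carries only values $\le h(b-c)\le\mathrm{avg}_{[b-c,b+c]}h$, also $\mathrm{avg}_{[b-c,b+c]}h\ge\mathrm{avg}_{[c-b,c+b]}h$. Writing $P=\mathrm{avg}_{[b-c,b+c]}h-\mathrm{avg}_{[c-b,c+b]}h\ge 0$ and $Q=\mathrm{avg}_{[-b,b]}h-\mathrm{avg}_{[-c,c]}h\ge 0$, the left side of $(\dagger)$ minus the right side is $\rho P-\rho_0Q$, which is $\ge 0$ as soon as $P\ge Q$ (using $\rho\ge\rho_0\ge 0$). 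So everything comes down to the key one-dimensional inequality
\begin{equation}
\mathrm{avg}_{[b-c,b+c]}h-\mathrm{avg}_{[c-b,c+b]}h\ \ge\ \mathrm{avg}_{[-b,b]}h-\mathrm{avg}_{[-c,c]}h
\tag{$\ddagger$}
\end{equation}
for every even convex $h$ and $0\le c\le b$, which I expect to be the main obstacle. Since all four averages are linear in $h$, by the integral representation $h(t)=\alpha+\beta|t|+\int_0^\infty(|t|-\lambda)_+\,\nu(\dd\lambda)$ of an even convex function ($\beta\ge 0$, $\nu\ge 0$) it is enough to verify $(\ddagger)$ for $h\equiv 1$ (both sides vanish) and for the tents $h_\lambda(t)=(|t|-\lambda)_+$, $\lambda\ge 0$; for a fixed $h_\lambda$ all four averages are explicit piecewise-quadratic functions of $\lambda$, and one concludes by a finite case analysis according to the position of $\lambda$ relative to $b-c$, $c$, $b$, $b+c$, each case reducing to an elementary polynomial estimate. (Equivalently, $(\dagger)$ says that one explicit mixture $W$ of uniform distributions dominates another $W'$ in the increasing convex order on $[0,\infty)$, so it reduces to comparing $\lambda\mapsto\E(W-\lambda)_+$ with $\lambda\mapsto\E(W'-\lambda)_+$; this is the route I would actually carry out.) The remaining items---the $C^1$-regularity, differentiation under the expectation, and recovering Schur-concavity of $F$ from $(\dagger)$---are routine.
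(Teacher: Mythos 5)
Your reduction is sound and, in fact, runs parallel to the paper's own argument: the paper also verifies the Schur--Ostrowski criterion, conditions on the values of $X_1,X_2$, and arrives at exactly your inequality $(\ddagger)$. Indeed, writing $H(x)=\int_0^x h(s)\,\dd s$, your $(\ddagger)$ is precisely inequality \eqref{eq:2point-C} with $D=1$ for $\phi=H$ (note $H(x)=\frac{1}{p(p-1)}\E F'(x+R)$, which is the paper's $\phi$ up to a constant), and $H(x)=xr(x)$ with $r(x)=\frac1x\int_0^x h$, which is even, nondecreasing and convex on $(0,\infty)$ whenever $h$ is even and convex, i.e.\ $r\in\mathcal{C}_2$. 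Your linearisation $\psi(U+b)-\psi(U-b)=(p-1)\int_{-b}^{b}|U+t|^{p-2}\,\dd t$ is a genuinely nice simplification: it makes the convexity input from $p\ge 3$ immediate and bypasses the paper's Lemma \ref{lm:r-is-convex}, which is proved there by a somewhat laborious second/third-derivative computation. The condition $\rho_0\le\frac12$ enters in both arguments in the same way ($\rho\ge\rho_0$, i.e.\ $\rho_0^{-1}-1\ge 1$).

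The gap is that the key inequality $(\ddagger)$ is never actually proved; everything preceding it (differentiation, conditioning, the bounds $P,Q\ge 0$) is routine, and $(\ddagger)$ \emph{is} the mathematical content of the theorem -- it is exactly what the paper's Lemma \ref{lm:2point-C} establishes. Your proposed route (reduce, by the representation of an even convex $h$ as a mixture of tents $(|t|-\lambda)_+$, to a finite case analysis in $\lambda$) is viable, and the statement is true, but the case analysis is asserted rather than carried out, and it is not quite as automatic as advertised: for a fixed tent the four interval averages are piecewise \emph{quadratic} in $\lambda$ (and likewise the difference is quadratic in $c$ for fixed $\lambda$), so unlike the paper's reduction -- where the relevant function of the node parameter $\gamma$ is piecewise linear and nonnegativity is settled by evaluating at the nodes $\gamma\in\{0,a,1-a,1\}$ -- one must genuinely verify a handful of quadratic inequalities (e.g.\ discriminant or monotonicity arguments) in each of the cases $\lambda\in[0,b-c]$, $[b-c,c]$ or $[c,b-c]$, $[\max(c,b-c),b]$, $[b,b+c]$. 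Until that finite but real computation is written down (or $(\ddagger)$ is derived from a known convex-order fact), the proof is incomplete at its decisive step.
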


As an immediate corollary, we obtain best constants in Khinchin inequalities (it can be done as, for instance, in the proof of Corollary 25 from \cite{ENT1}).

\begin{corollary}
Under the assumptions of Theorem \ref{thm:Schur}, the best constant $C_p$ such that the inequality $N_{p}(a) \leq C_pN_{2}(a)$ holds for all $n$ and $a \in \R^n$ is $C_p =\|G\|_p$.
\end{corollary}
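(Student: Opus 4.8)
The plan is to deduce the best-constant Khinchin inequality $N_p(a) \le \|G\|_p N_2(a)$ from the Schur-concavity statement of Theorem~\ref{thm:Schur}, following the standard limiting argument (as in the proof of Corollary~25 in \cite{ENT1}).

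First I would normalize: by homogeneity of both sides in $a$, it suffices to prove the inequality for $a$ with $N_2(a) = 1$, equivalently (since $\|X\|_2^2 = (1-\rho_0) \cdot \frac{1}{1} = 1-\rho_0$ when $L=1$, so $N_2(a)^2 = (1-\rho_0)\sum a_i^2$) for $\sum_{i=1}^n a_i^2 = \frac{1}{1-\rho_0}$, a fixed value. Write $b_i = a_i^2$, so $b = (b_1,\dots,b_n)$ lies in the simplex $\{b \in [0,\infty)^n : \sum b_i = c\}$ with $c = \frac{1}{1-\rho_0}$. Theorem~\ref{thm:Schur} says $b \mapsto N_p(\sqrt{b_1},\dots,\sqrt{b_n})$ is Schur-concave, hence on this simplex it is minimized (its infimum over the constraint is irrelevant; we want the maximum) — wait, Schur-concave functions are \emph{maximized} at the most balanced point $b = (c/n,\dots,c/n)$ and decrease toward the extreme points. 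So the supremum of $N_p(a)$ over all $n$ and all $a$ with $N_2(a)=1$ is approached by taking $a_i = \sqrt{c/n}$ and letting $n \to \infty$.

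The key step is then the central limit theorem: with $a_i = \frac{1}{\sqrt{n}}\sqrt{c}$ and $c = \frac{1}{1-\rho_0}$, the normalized sum $\sum_{i=1}^n a_i X_i = \sqrt{\frac{c}{n}}\sum_{i=1}^n X_i$ has variance $c \cdot (1-\rho_0) = 1$ and, since the $X_i$ are i.i.d.\ mean-zero with finite moments of all orders (they are bounded), converges in distribution to $G$. Uniform integrability of $\left|\sum a_i X_i\right|^p$ (again immediate from boundedness of the summands and a standard moment bound, e.g.\ Rosenthal's inequality, or simply the fact that all moments converge for sums of bounded i.i.d.\ variables) upgrades this to convergence of $p$-th moments, giving $N_p(a^{(n)}) \to \|G\|_p$. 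Combined with the Schur-concavity bound this yields $N_p(a) \le \|G\|_p N_2(a)$ for all $a$, and the value $\|G\|_p$ is attained in the limit, so it is the best constant. The only mild subtlety — and the place requiring the care referenced in \cite{ENT1} — is justifying that the Schur-concave maximum over a fixed-dimensional simplex indeed increases to the $n\to\infty$ Gaussian limit monotonically (or at least that the limit is a supremum), which follows by comparing the uniform vector in $\R^n$ with the uniform vector in $\R^{n+1}$ via majorization after padding with a zero coordinate and rescaling.
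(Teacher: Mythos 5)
Your argument is correct and is exactly the standard deduction the paper has in mind (it defers to Corollary 25 of \cite{ENT1}): Schur-concavity reduces to the balanced vector, zero-padding plus majorization shows the balanced values in dimension $n$ are dominated by their $n\to\infty$ limit, and the CLT with uniform integrability (from boundedness of $X$) identifies that limit as $\|G\|_p N_2(a)$, with the same CLT giving sharpness. No gaps; the "mild subtlety" you flag is indeed the only point needing care, and your padding/rescaling resolution is the intended one.
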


Our next result concerns Khinchin inequalities for $p \geq 3$ for arbitrary $L$ and $\rho_0 = 0$. 

\begin{theorem}\label{thm:2-p>3}
Let $\rho_0 = 0$ and $L \geq 1$ be an integer. If $p \geq 3$, the the best constant $C_p$ such that the inequality $N_{p}(a) \leq C_pN_{2}(a)$ holds for all $n$ and $a \in \R^n$ is $C_p = \|G\|_p$.
\end{theorem}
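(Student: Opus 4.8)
Let me think about this. We need to show that for $\rho_0 = 0$, $X$ uniform on $\{\pm 1, \ldots, \pm L\}$, and $p \geq 3$, the best Khinchin constant is $\|G\|_p$.

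The lower bound $C_p \geq \|G\|_p$ is standard: take $a_1 = \cdots = a_n = 1/\sqrt{n}$ and let $n \to \infty$; by the CLT, $\sum a_i X_i$ converges in distribution to $\sigma G$ where $\sigma^2 = \Var(X)$, and by uniform integrability of $p$-th powers (need a bound on $\sup_n \|\sum a_i X_i\|_{p+1}$, which holds since the $X_i$ are bounded), the moments converge. So $N_p(a)/N_2(a) \to \|\sigma G\|_p/\|\sigma G\|_2 = \|G\|_p/\|G\|_2 = \|G\|_p$ (since $\|G\|_2 = 1$).

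The upper bound is the real content: we must show $N_p(a) \leq \|G\|_p \cdot N_2(a)$ for all $a$, equivalently (by normalizing $\|\sum a_i X_i\|_2 = 1$, i.e. $\sum a_i^2 \sigma^2 = 1$) that $\E|\sum a_i X_i|^p \leq \E|G|^p$. The strategy hinted by the paper (''using convex dominance by Gaussian random variables'') is the following: for $p \geq 3$, the function $t \mapsto |t|^p$ is... well, not convex-dominance per se, but the idea is to compare $\sum a_i X_i$ with a Gaussian of the same variance in the convex order, or rather to use that $x \mapsto |x|^p$ belongs to a class of functions for which the inequality $\E f(\sum a_i X_i) \leq \E f(\sigma G)$ holds. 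The cleanest route: it suffices to prove that a single summand $a_i X_i$ is dominated, in an appropriate sense, by a Gaussian $g_i \sim N(0, a_i^2\sigma^2)$, and then use a tensorization/subordination argument. Concretely, I would try to show that $X$ can be written so that for every convex function $\varphi$ with $\varphi''$ convex (or every function of the form $x \mapsto |x|^p$, $p\geq 3$), one has $\E \varphi(tX + Y) \leq \E\varphi(t\sigma G + Y)$ for all $t$ and all independent $Y$ — this is exactly the kind of statement that tensorizes.

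Here is the mechanism I'd bet on. A symmetric random variable $X$ with variance $\sigma^2$ satisfies $\E\varphi(X+Y) \leq \E\varphi(\sigma G + Y)$ for all independent symmetric $Y$ and all $\varphi$ with $\varphi^{(4)} \geq 0$ (equivalently $\varphi'' $ convex), provided $X$ is dominated by $\sigma G$ in the sense that $\E X^2 = \sigma^2$ and, loosely, $X$ has ''lighter tails / more mass near zero'' captured by a sign condition on $\E[\varphi(X)] - \E[\varphi(\sigma G)]$ for all such $\varphi$; the key computational lemma is that $t \mapsto \E|t + x|^p$ has nonnegative fourth derivative in $t$ for $p \geq 3$ (indeed $\frac{d^4}{dt^4}|t|^p = p(p-1)(p-2)(p-3)|t|^{p-4} \geq 0$ when $p \geq 3$, interpreting appropriately at $0$ and noting the distributional corrections have the right sign for $p\geq 3$; this is precisely where $p\geq 3$ enters and why it fails below). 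Given this, the comparison $\E g(X) \leq \E g(\sigma G)$ for all $g$ with $g^{(4)}\geq 0$ reduces, after integrating by parts / using the Taylor-with-integral-remainder representation $g(x) = (\text{cubic poly}) + \int (x-s)_+^3 g^{(4)}(s)\,\dd s / 6$, to checking $\E(X - s)_+^3 \leq \E(\sigma G - s)_+^3$ for all $s \in \R$ — an elementary one-dimensional inequality for the explicit discrete distribution of $X$ on $\{\pm 1, \ldots, \pm L\}$ versus the Gaussian. Then tensorize: apply the single-variable comparison successively to $X_1, \ldots, X_n$, replacing each $a_i X_i$ by $a_i \sigma G_i$ while conditioning on the rest, using that the conditional ''rest'' is symmetric and that $x\mapsto |x|^p$ composed with the partial sums still has the fourth-derivative sign property.

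The main obstacle, and where I expect most of the work to go, is twofold. First, verifying the one-dimensional inequality $\E(X-s)_+^3 \leq \E(\sigma G - s)_+^3$ (or the equivalent ''fourth-derivative'' comparison) \emph{for every integer $L\geq 1$ simultaneously} — one needs a clean closed form or a monotonicity-in-$L$ argument, since $\sigma^2 = \Var(X) = \frac{1}{L}\sum_{j=1}^L j^2 = \frac{(L+1)(2L+1)}{6}$ grows with $L$, and the comparison must be uniform. Second, the tensorization step requires care: after replacing some $a_iX_i$ by Gaussians, the ''environment'' $Y$ is a sum of the remaining $a_jX_j$ plus Gaussians, which is symmetric but no longer bounded, so one must ensure the integrability and the sign conditions propagate; the standard fix is to note that the class of test functions (those representable with nonnegative fourth ''derivative'' in the distributional sense) is closed under convolution with symmetric kernels, so the inductive step goes through. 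I would also double-check the endpoint $p = 3$ separately since the fourth derivative of $|t|^3$ vanishes a.e. and the comparison becomes an equality of third moments — there one likely needs the comparison to be strict or to argue by a limiting/continuity argument in $p$.
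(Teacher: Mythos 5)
Your overall architecture matches the paper's: sharpness by the CLT, then swapping the $X_i$ for Gaussians of matching variance one at a time (conditioning on the remaining summands), thereby reducing the theorem to a single one-dimensional comparison over a cone of test functions generated by simple kernels. Your choice of cone (functions with nonnegative fourth distributional derivative, which contains $|\cdot|^p$ for $p\ge 3$ and is stable under convolution) is a legitimate alternative to the paper's: the paper instead writes $2\,\E|a+X_1|^p=\E h_a(X_1^2)$ with $h_a(x)=|a+\sqrt{x}|^p+|a-\sqrt{x}|^p$ convex nondecreasing for $p\ge 3$, and proves $\E h(X_1^2)\le \E h(G_1^2)$ for all convex nondecreasing $h$, i.e.\ it reduces to the piecewise-linear kernels $(x^2-a)_+$ rather than to your cubic ramps. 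Incidentally, your target $\E(X-s)_+^3\le\E(\sigma G-s)_+^3$ would in fact follow from the paper's comparison, since for symmetric variables $\E(X-s)_+^3=\frac{1}{2}\E\psi_s(X^2)$ with $\psi_s(u)=(\sqrt{u}-s)_+^3+(-\sqrt{u}-s)_+^3$ convex and nondecreasing in $u$; also your worry about $p=3$ is unfounded: there the fourth distributional derivative of $|t|^p$ is a positive multiple of a Dirac mass, and the comparison reduces exactly to the cubic-ramp inequality, not to an equality of third moments.

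The genuine gap is precisely at the step you flag yourself: the one-dimensional inequality $\E(X-s)_+^3\le\E(\sigma G-s)_+^3$ for all $s\in\R$ and all integers $L\ge 1$ (with $\sigma^2=\frac{(L+1)(2L+1)}{6}$) is only asserted as ``an elementary inequality for the explicit discrete distribution,'' with no proof and no strategy for a verification uniform in $L$. This is not a routine check; it is the analogue of the paper's Claim $\E(X_1^2-a)_+\le\E(G_1^2-a)_+$, whose proof is the bulk of the paper's argument: an analysis of the difference function $f(a)$ via its derivative, a lower bound $g$ whose sign is controlled through the substitution $t=L^2/\sigma^2$ and a monotonicity argument valid only for $L\ge 7$, and separate slope/tangent estimates with numerically verified constants for $2\le L\le 6$. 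Without supplying a proof of your cubic stop-loss comparison (or deducing it from a proved statement such as the paper's Claim), your argument establishes only the reduction, not the theorem.
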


Finally, in the presence of large mass at $0$ and arbitrarily many atoms $L$, we obtain a sharp $L_1-L_2$ inequality, which holds in a greater generality.

\begin{theorem}\label{thm:L1-L2}
Let $\rho_0 \in [\frac{1}{2},1)$ and let $Y, Y_1, Y_2, \ldots$ be i.i.d. symmetric random variables with $\p{Y = 0} = \rho_0$. Define $N_p(a) = \|\sum_{i=1}^n a_iY_i\|_p$. The best constant $c_1$ such that the inequality $N_{1}(a) \geq c_1N_{2}(a)$ holds for all $n$ and $a \in \R^n$ is $c_1 = \|Y\|_1/\|Y\|_2$.
\end{theorem}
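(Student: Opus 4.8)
The plan is to follow Haagerup's approach to the $L_1$-$L_2$ Khinchin inequality for random signs, exploiting the fact that a symmetric random variable $Y$ with $\p{Y=0}=\rho_0 \geq \frac12$ can be written as $Y = \varepsilon |Y|$ where $\varepsilon$ is an independent random sign, and crucially that $\p{Y=0}$ being at least $\frac12$ means $Y$ is, in a suitable sense, "less spread out" than a random sign — this is what will let the random-sign bound dominate. First I would reduce to the normalised case $N_2(a)=1$, i.e. $\E|Y|^2 \sum a_i^2 = 1$, so that the claim becomes $\E\left|\sum_{i=1}^n a_i Y_i\right| \geq \|Y\|_1/\|Y\|_2 =: c_1$. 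The lower bound $c_1$ is attained in the limit $n\to\infty$ with all $a_i$ equal (by the central limit theorem, $\sum a_iY_i/\|S\|_2 \to G$, and one checks $\|G\|_1 > c_1$ is \emph{not} what we want — rather the single-coordinate vector $a=(1)$ gives exactly $\|Y\|_1/\|Y\|_2$), so the content is the inequality itself.

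The key analytic device, as in Haagerup, is an integral representation of $|x|$: using $|x| = c\int_0^\infty \frac{1-\cos(tx)}{t^2}\,\dd t$ for an appropriate constant $c>0$, one gets
\begin{equation}\label{eq:int-rep}
\E\left|\sum_{i=1}^n a_iY_i\right| = c\int_0^\infty \frac{1 - \prod_{i=1}^n \E\cos(ta_iY_i)}{t^2}\,\dd t = c\int_0^\infty \frac{1 - \prod_{i=1}^n \varphi(ta_i)}{t^2}\,\dd t,
\end{equation}
where $\varphi(s) = \E\cos(sY) = \E e^{isY}$ is the (real, even) characteristic function of $Y$. Then I would seek a pointwise bound of the form $\varphi(s) \leq \psi(\lambda s)$, or more precisely compare $\prod \varphi(ta_i)$ against the corresponding product for random signs after a rescaling of the coefficients dictated by matching second moments; the hypothesis $\rho_0 \geq \frac12$ is exactly what should guarantee the relevant elementary inequality on characteristic functions. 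Concretely, writing $Y = \varepsilon|Y|$ and conditioning, $\varphi(s) = \E\cos(s|Y|) = \rho_0 + (1-\rho_0)\E[\cos(s|Y|)\mid Y\neq 0]$, and since $\rho_0 \geq \frac12$ one has $\varphi(s) \geq 2\rho_0 - 1 \geq 0$ for... — more usefully, one wants an \emph{upper} bound showing $\varphi$ decays no faster than a Gaussian-type or cosine-type profile with the right variance, so that \eqref{eq:int-rep} is minimised at the one-atom configuration.

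The main obstacle I anticipate is establishing the correct pointwise comparison lemma for $\varphi$ under the sole assumption $\rho_0 \in [\frac12,1)$ (with $Y$ otherwise arbitrary symmetric), and then verifying that this comparison, fed into \eqref{eq:int-rep}, genuinely yields the extremiser $a=(1,0,\ldots,0)$ rather than an equal-weights or infinite-dimensional configuration — this requires checking a one-variable optimisation and ruling out boundary cases, in the spirit of Haagerup's case analysis. I would handle the extra generality (arbitrary $|Y|$) by noting that the argument only uses $\varphi \geq 0$ on a suitable range and convexity/monotonicity properties that follow from $\rho_0 \geq \frac12$ via the decomposition above, and finally confirm sharpness by exhibiting $a=(1,0,\ldots,0)$, for which $N_1(a)/N_2(a) = \|Y\|_1/\|Y\|_2$ with equality, so no smaller constant can work.
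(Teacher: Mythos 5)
Your plan correctly reproduces the outer shell of the paper's argument: sharpness via $a=(1,0,\ldots,0)$, Haagerup's integral representation $N_1(a)=\frac{2}{\pi}\int_0^\infty\bigl[1-\prod_j\phi_Y(a_jt)\bigr]\frac{\dd t}{t^2}$, and the observation that $\rho_0\geq\frac12$ forces $\phi_Y\geq 2\rho_0-1\geq 0$. But the core of the proof is exactly the part you flag as an ``anticipated obstacle'' and leave open, and the mechanism you propose for it --- a pointwise comparison of $\phi_Y$ with the characteristic function of (rescaled) random signs so that the product is dominated --- is not how the hypothesis $\rho_0\geq\frac12$ actually enters, and it is unclear it can work: the extremiser here is the single-coordinate vector, not the two-point or CLT configuration relevant to random signs, so a pointwise domination of characteristic functions does not by itself locate the extremiser, and you give no candidate lemma or verification.

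What the paper does instead, after the integral representation, is Haagerup's AM--GM decoupling $\prod_j\phi_Y(a_jt)\leq\sum_j a_j^2|\phi_Y(a_jt)|^{a_j^{-2}}$ (with $\sum a_j^2=1$), which reduces the $n$-variable problem to the single inequality $F(s)\geq F(1)$ for $s\geq1$, where $F(s)=\frac{2}{\pi}\int_0^\infty\bigl[1-|\phi_Y(t/\sqrt{s})|^s\bigr]\frac{\dd t}{t^2}$ and $F(1)=\E|Y|$ precisely because $\phi_Y\geq0$. The inequality $F(s)\geq F(1)$ is then proved by noting that, for fixed $s$, $F(s)$ is concave in $\rho_0$ while $F(1)=(1-\rho_0)\E R$ is linear, so it suffices to check the endpoints $\rho_0=1$ (trivial) and $\rho_0=\frac12$; at $\rho_0=\frac12$ one uses the identity $\frac{1+\cos x}{2}=\cos^2(x/2)$, convexity of $u\mapsto|u|^s$ (Jensen in the law of $R=|Y|$ conditioned on $Y\neq0$, which is how the arbitrary symmetric law is handled), and a change of variables to reduce everything to the monotonicity of Haagerup's function, giving $F_{\mathrm{Haa}}(2s)\geq F_{\mathrm{Haa}}(2)=\frac{1}{\sqrt2}$. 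None of these steps --- the AM--GM reduction to $F(s)\geq F(1)$, the concavity-in-$\rho_0$ reduction to the boundary case $\rho_0=\frac12$, and the half-angle/convexity reduction to Haagerup's monotone function --- appears in your proposal, so as it stands the proof has a genuine gap at its central point.
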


Some restrictions on $\rho_0$ in our theorems are needed, however our specific ones may not be optimal. We defer a discussion to the last section.

\begin{remark}
When $p$ is a positive even integer, Theorem \ref{thm:2-p>3} can be deduced from Newman's results from \cite{New1} (see also \cite{New2}).
\end{remark}

\begin{remark}\label{rem:even}
Using \cite{NO}, Theorem \ref{thm:2-p>3} can be extended to a sharp moment comparison between all even moments with an \emph{optimal} restriction on $\rho_0 $, which will appear elsewhere.
\end{remark}

\begin{remark}
When $L=1$ and $Y=X$, Theorem \ref{thm:L1-L2} follows from general results of Oleszkiewicz from \cite{koles} concerning arbitrary symmetric random variables and coefficients in Banach space (see Corollary 2.4 therein). 
\end{remark}

We finish this section with a few words on proofs. Our proof of Theorem \ref{thm:Schur} follows a direct approach from Eaton's work \cite{Eat}, combined with techniques (used for instance in \cite{FHJSZ}, or  \cite{ENT2}) exploiting linearity and allowing to reduce verification of certain inequalities needed for averages of power functions $|\cdot|^p$ to \emph{simple} (piecewise linear) functions. To prove Theorem \ref{thm:2-p>3}, we employ an inductive argument (on $n$) which crucially uses independence and convexity of certain functions and is based on swapping the $X_i$ one by one with independent Gaussians.
For Theorem \ref{thm:L1-L2}, we extend Haagerup's short proof from \cite{Haa} of Szarek's result from \cite{Sz} saying that the best constant $C_{2,1}$ in \eqref{eq:CK} is $\sqrt{2}$ (for the latter, see also \cite{LO-best, Lit, Tom}).
We rely on an integral representation for the first moment used by Haagerup, combined with convexity arguments allowing to handle more atoms.


\section{Proofs}\label{sec:proofs}

\subsection{A Schur-convexity result: Proof of Theorem \ref{thm:Schur}}

We begin with two technical lemmas. Let $\mathcal{C}$ be the linear space of all continuous functions on $\R$ equipped with pointwise topology. Let $\mathcal{C}_{1} \subset \mathcal{C}$ be the cone of all odd functions on $\R$ which are nondecreasing convex on $(0,+\infty)$ and let $\mathcal{C}_{2} \subset \mathcal{C}$ be the cone of all even functions on $\R$ which are nondecreasing convex on $(0,+\infty)$. Note that $\mathcal{C}_{2}$ is the closure (in the pointwise topology) of the set $\mathcal{S} = \{(|x|-\gamma)_+, \ \gamma \geq 0\}$ .

\begin{lemma}\label{lm:r-is-convex}
Let $q \geq 2$, $w \geq 0$ and $\phi_w(x) = \sgn(x+w)|x+w|^q + \sgn(x-w)|x-w|^q$, $x \in \R$. Then $\phi_w \in \mathcal{C}_1$. Let $r_w(x) = \frac{\phi_w(x)}{x}$, $x \in \R$ (with the value at $x=0$ understood as the limit). Then $r_w \in \mathcal{C}_2$.
\end{lemma}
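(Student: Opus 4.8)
The plan is to verify each claimed property directly from the definition of $\phi_w$, reducing everything to elementary calculus on $(0,+\infty)$.

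\textbf{Step 1: $\phi_w$ is odd.} This is immediate: replacing $x$ by $-x$ sends $\sgn(x+w)|x+w|^q + \sgn(x-w)|x-w|^q$ to $\sgn(w-x)|w-x|^q + \sgn(-w-x)|-w-x|^q = -\big(\sgn(x-w)|x-w|^q + \sgn(x+w)|x+w|^q\big)$, using that $t\mapsto \sgn(t)|t|^q$ is odd and $|{-t}|=|t|$. Hence $\phi_w(-x)=-\phi_w(x)$. In particular $\phi_w(0)=0$, so $r_w$ extends continuously at $0$ (the limit is $0$ if $q>2$ and is $2w^{q-1}\cdot$const... — more carefully, $\phi_w(x)/x \to \phi_w'(0)$ whenever $\phi_w$ is differentiable at $0$, which it is for $q\ge 2$), and $r_w$ is even since $\phi_w$ is odd.

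\textbf{Step 2: $\phi_w$ is nondecreasing and convex on $(0,+\infty)$.} Write $g(t)=\sgn(t)|t|^q$; for $q\ge 2$ this function is $C^1$ on $\R$ with $g'(t)=q|t|^{q-1}\ge 0$, and $g'$ is itself convex on $\R$ (it equals $q|t|^{q-1}$, convex for $q\ge 2$) — actually I only need $g$ convex on $(0,\infty)$ and the shifts to cooperate, so let me instead argue: $\phi_w(x)=g(x+w)+g(x-w)$. Since $g$ is nondecreasing on all of $\R$, $\phi_w$ is nondecreasing. For convexity on $(0,\infty)$: $g$ is convex on $[0,\infty)$ and convex on $(-\infty,0]$, and for $x>0$ one has $x+w>0$ while $x-w$ may have either sign; the sum $g(x+w)+g(x-w)$ has second derivative (in the a.e./distributional sense, or classically where $x\ne w$) equal to $g''(x+w)+g''(x-w)$. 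Now $g''(t)=q(q-1)\sgn(t)|t|^{q-2}$, which for $q\ge 2$ is nonnegative for $t>0$ and nonpositive for $t<0$. So I must check $g''(x+w)+g''(x-w)\ge 0$ for $x>0$, i.e. $q(q-1)\big((x+w)^{q-2} - (w-x)^{q-2}_+\big)\ge 0$ when $0<x<w$, which holds because $x+w>w-x>0$ and $t\mapsto t^{q-2}$ is nondecreasing on $(0,\infty)$ for $q\ge 2$; for $x>w$ both terms are nonnegative. One also checks that the first derivative $\phi_w'$ has no downward jump at $x=w$ (in fact $\phi_w'$ is continuous there since $g'$ is continuous), so $\phi_w$ is genuinely convex on $(0,\infty)$. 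This gives $\phi_w\in\mathcal{C}_1$.

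\textbf{Step 3: $r_w=\phi_w/x\in\mathcal{C}_2$.} Evenness and continuity were handled in Step 1. It remains to show $r_w$ is nondecreasing and convex on $(0,\infty)$. The clean way is to use the integral representation: since $\phi_w$ is convex on $(0,\infty)$ with $\phi_w(0^+)=0$, one can write $\phi_w(x)=\int_0^x \phi_w'(s)\,\dd s$ with $\phi_w'$ nondecreasing, and then
$$r_w(x)=\frac{1}{x}\int_0^x \phi_w'(s)\,\dd s = \int_0^1 \phi_w'(xt)\,\dd t.$$
Since $\phi_w'$ is nonnegative, nondecreasing, and convex on $(0,\infty)$ — the first two from Step 2, and convexity of $\phi_w'(s)=q\big((s+w)^{q-1}+\sgn(s-w)|s-w|^{q-1}\big)$ from the same second-derivative computation as in Step 2 applied one order lower (here I use $q\ge 2$ so that $q-1\ge 1$ and $t\mapsto t^{q-1}$ is convex nondecreasing on $(0,\infty)$) — each integrand $t\mapsto \phi_w'(xt)$ is nondecreasing and convex in $x$ for fixed $t\in(0,1]$, and these properties are preserved under averaging over $t$. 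Hence $r_w$ is nondecreasing and convex on $(0,\infty)$, so $r_w\in\mathcal{C}_2$.

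\textbf{Main obstacle.} The only genuinely delicate point is handling the non-smoothness at $x=w$ (and at $s=w$ for $\phi_w'$) when $q$ is close to $2$, where $g''$ blows up or is merely a signed power; I expect to phrase Steps 2--3 either via one-sided derivatives and checking no bad jumps, or via the averaging identity $r_w(x)=\int_0^1\phi_w'(xt)\dd t$, which sidesteps differentiating $r_w$ directly and reduces the whole lemma to the single fact that $s\mapsto (s+w)^{q-1}+\sgn(s-w)|s-w|^{q-1}$ is nonnegative, nondecreasing and convex on $(0,\infty)$ for $q\ge 2$ — itself a one-line consequence of $t\mapsto t^{q-1}$ being nonnegative, nondecreasing and convex on $(0,\infty)$ together with the elementary inequality $(s+w)^{q-1}\ge (w-s)^{q-1}$ for $0<s<w$.
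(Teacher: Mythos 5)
Your overall strategy is sound, but there is a concrete slip in Step 3: since $\frac{\dd}{\dd t}\big(\sgn(t)|t|^q\big)=q|t|^{q-1}$ (which you yourself use in Step 2), the correct derivative is $\phi_w'(s)=q\big((s+w)^{q-1}+|s-w|^{q-1}\big)$ for $s>0$, with no $\sgn(s-w)$ factor. This is not a cosmetic point: for the function you actually wrote down, the convexity claim would be false for $2\le q<3$, because on $(0,w)$ the term $\sgn(s-w)|s-w|^{q-1}=-(w-s)^{q-1}$ is concave, and your ``second-derivative computation one order lower'' produces $(s+w)^{q-3}-(w-s)^{q-3}$, which is negative there. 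Fortunately the repair makes the argument simpler, not harder: with the true $\phi_w'$, convexity on $(0,\infty)$ is immediate since $t\mapsto|t|^{q-1}$ is convex on $\R$ for $q\ge2$, nonnegativity is obvious, and monotonicity on $(0,\infty)$ is exactly your Step 2 inequality $(s+w)^{q-2}\ge(w-s)^{q-2}$ for $0<s<w$. With that fixed, the identity $r_w(x)=\frac1x\int_0^x\phi_w'(s)\,\dd s=\int_0^1\phi_w'(xt)\,\dd t$ (legitimate because $\phi_w$ is $C^1$ for $q\ge2$ and $\phi_w(0)=0$) does give that $r_w$ is nondecreasing and convex on $(0,\infty)$, and Steps 1--2 are correct, so the lemma follows.

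Once corrected, your route is genuinely different from the paper's, and arguably cleaner. The paper first reduces to $w=1$ by homogeneity, proves $r\in\mathcal{C}_2$ by a direct case analysis ($x\ge1$ versus $0<x<1$), computing $(x^3r''(x))'=q(q-1)(q-2)x^2\big[(x+1)^{q-3}+(x\mp1)^{q-3}\big]$ and checking the boundary values of $x^3r''$ and $r'$, and only then deduces $\phi\in\mathcal{C}_1$ from $r\in\mathcal{C}_2$ via $\phi(x)=xr(x)$, $\phi''=2r'+xr''$ --- the implication runs in the opposite direction to yours. Your averaging identity replaces this third-derivative bookkeeping by the soft facts that monotonicity and convexity survive composition with $x\mapsto xt$ and averaging in $t$, and it treats all $w\ge0$ at once without the homogeneity reduction; what it buys is brevity and a transparent reason for the convexity of $r_w$, at the cost of needing the (one-line, once the derivative is stated correctly) convexity of $\phi_w'$ itself.
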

\begin{proof}
The case $w=0$ is clear. For $w > 0$, verifying that $\phi_w \in \mathcal{C}_1$ and $r_w \in \mathcal{C}_2$, by homogeneity, is equivalent to doing so for $w=1$. Let $w=1$ and denote $\phi=\phi_1$ and $r=r_1$. Suppose we have shown that $r \in \mathcal{C}_2$. Then, plainly, $\phi(x) = xr(x)$ is also nondecreasing on $(0,\infty)$ and $\phi''(x) = (r(x) +xr'(x))' = 2r'(x) + xr''(x)$ is nonnegative on $(0,\infty)$ since $r'$ and $r''$ are nonnegative on $(0,\infty)$. 

It remains to prove that $r \in \mathcal{C}_2$. Plainly $\phi(x)$ is odd and thus $r(x)$ is even. Thus we consider $x > 0$.

\bigskip
\noindent
\emph{Case 1.} $x \geq 1$. We have, $\phi(x) = (x+1)^q + (x-1)^q$,
\[
r'(x) = \frac{\phi'(x)}{x} - \frac{\phi(x)}{x^2} = q\frac{(x+1)^{q-1}+(x-1)^{q-1}}{x} - \frac{(x+1)^q+(x-1)^q}{x^2}
\]
and
\begin{align*}
x^3r''(x) &= x^3\Bigg[\frac{\phi''(x)}{x}-2\frac{\phi'(x)}{x^2}+2\frac{\phi(x)}{x^3}\Bigg]=q(q-1)x^2\Big[(x+1)^{q-2}+(x-1)^{q-2}\Big] \\
&\qquad\qquad\qquad\qquad\qquad-2qx\Big[(x+1)^{q-1}+(x-1)^{q-1}\Big]+ 2\Big[(x+1)^q+(x-1)^q\Big].
\end{align*}
Taking one more derivative gives
\[
(x^3r''(x))' = q(q-1)(q-2)x^2\Big[(x+1)^{q-3}+(x-1)^{q-3}\Big]
\]
which is clearly positive for $x > 1$ since $q \geq 2$. Thus, for $x > 1$, we have
\[
x^3r''(x) > r''(1) = q(q-1)\cdot 2^{q-2}-2q\cdot 2^{q-1}+2\cdot 2^{q} = 2^{q-2}\left( \left(q-\frac{5}{2}\right)^2 + \frac{7}{4}\right) > 0.
\]
Therefore, $r''(x) > 0$ for $x > 1$. Since $r'(1) = q2^{q-1}-2^q = 2^{q-1}(q-2) \geq 0$, we also get that $r'(x)$ is positive for $x > 1$.

\bigskip
\noindent
\emph{Case 2.} $0 < x < 1$. The argument and the computations are very similar to Case 1. We have, $\phi(x) = (1+x)^q - (1-x)^q$, we find that 
$(x^3r''(x))' = q(q-1)(q-2)x^2\Big[(1+x)^{q-3}+(1-x)^{q-3}\Big].$
If $q > 2$, this is positive for $0 < x  < 1$. Then in this case, consequently, $x^3r''(x) > x^3r''(x)\Big|_{x=0} = 0$, so $r''(x)$ is positive for $0 < x < 1$. As a result, $r'(x) > r'(0+) = 0$ for $0 < x < 1$. If $q = 2$, we simply have $\phi(x) = 4x$ and $r(x) = 4$. 

Combining the cases, we see that both $r'$ and $r''$ are nonnegative on $(0,+\infty)$, which finishes the proof.
\end{proof}


\begin{lemma}\label{lm:2point-C}
The best constant $D$ such that the inequality
\begin{equation}\label{eq:2point-C}
D\cdot\left[\frac{\phi(a+b)-\phi(b-a)}{2a} - \frac{\phi(a+b)+\phi(b-a)}{2b}\right] \geq \left[ \frac{\phi(b)}{b}-\frac{\phi(a)}{a}\right]
\end{equation}
holds for all $0 < a < b$ and every function $\phi(x)$ of the form $xr(x)$, $r \in \mathcal{C}_2$, is $D=1$.
\end{lemma}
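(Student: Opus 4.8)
\emph{Proof plan.} The plan is to use the linearity of \eqref{eq:2point-C} in $\phi$ to reduce to extremal $\phi$, and then to verify the resulting explicit inequalities by hand. Both sides of \eqref{eq:2point-C} depend linearly on $\phi$; moreover the left-hand bracket involves only $\phi(a+b)$ and $\phi(b-a)$, while the right-hand side involves only $r(a)$ and $r(b)$. Every $r\in\mathcal{C}_2$ can be written as $r(0)\cdot\1$ plus a pointwise limit of nonnegative combinations of the functions $(|x|-\gamma)_+$, $\gamma\ge 0$; hence every admissible $\phi=xr$ is such a combination of $x\cdot\1$ (i.e.\ $\phi(x)=x$), entering with an arbitrary real coefficient, and of the functions $x(|x|-\gamma)_+$, entering with nonnegative coefficients. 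Consequently it suffices to check that both sides of \eqref{eq:2point-C} agree when $\phi(x)=x$ — they do, both vanishing, since $\frac{(a+b)-(b-a)}{2a}=\frac{(a+b)+(b-a)}{2b}=1$ and $r(b)-r(a)=0$ — and that \eqref{eq:2point-C} holds with $D=1$ when $\phi(x)=x(|x|-\gamma)_+$ for each $\gamma\ge 0$. (The first point also shows that the left-hand bracket can equal $0$, so ``best constant'' means the least admissible $D$.)

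For $\phi(x)=x(|x|-\gamma)_+$ I would split according to whether $\gamma<b-a$ or $\gamma\ge b-a$. If $\gamma<b-a$, then $\phi(a+b)=(a+b)^2-\gamma(a+b)$ and $\phi(b-a)=(b-a)^2-\gamma(b-a)$; since the left-hand bracket is linear in $(\phi(a+b),\phi(b-a))$ and vanishes at $(a+b,\,b-a)$, subtracting $\gamma$ times the latter shows it equals its value at $\phi(x)=x|x|$, namely $\frac{b^2-a^2}{b}$, independently of $\gamma$, whereas the right-hand side equals $(b-\gamma)_+-(a-\gamma)_+\le b-a\le\frac{b^2-a^2}{b}$. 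If $\gamma\ge b-a$, then $\phi(b-a)=0$ and the left-hand bracket reduces to $(a+b)(a+b-\gamma)_+\,\frac{b-a}{2ab}\ge 0$; in this regime \eqref{eq:2point-C} with $D=1$ is trivial when $\gamma\ge b$ (the right-hand side vanishes) and when $b-a\le\gamma<a$ (the bracket already exceeds $b-a$, since $a+b-\gamma>b$), while on the remaining range $\max(a,b-a)\le\gamma<b$ it reads, after clearing denominators, $(b^2-a^2)(a+b-\gamma)\ge 2ab(b-\gamma)$ — an inequality affine in $\gamma$, so it is enough to test the endpoints $\gamma=b$ (immediate) and $\gamma=\max(a,b-a)$, where it becomes $b^2-ab-a^2\ge 0$ (valid when $b\ge 2a$) or $b(b-a)^2\ge 0$ (when $b<2a$).

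For optimality: taking $\phi(x)=x|x|$ (the case $\gamma=0$), the left-hand bracket equals $\frac{b^2-a^2}{b}$ and the right-hand side equals $b-a$, so \eqref{eq:2point-C} forces $D\ge\frac{b(b-a)}{b^2-a^2}=\frac{b}{a+b}$; letting $a\to 0^+$ with $b$ fixed gives $D\ge 1$, so $D=1$ is best possible.

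I expect the only real obstacle to be the bookkeeping in the case $\phi(x)=x(|x|-\gamma)_+$ with $\gamma\ge b-a$: there one must keep track of the position of $\gamma$ relative to $a$ and $b$ (hence of the sign of $b-2a$), and in a couple of these regimes the decisive quadratic inequality holds only by virtue of $0<a<b$; each individual verification is, however, short.
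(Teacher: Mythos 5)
Your proposal is correct and takes essentially the same route as the paper: reduce by linearity/density to the extremal functions $\phi(x)=x(|x|-\gamma)_+$, verify the resulting expression, which is piecewise affine in $\gamma$, by checking it at the breakpoints (the paper normalizes $b=1$ and checks the nodes $\gamma\in\{0,a,1-a,1\}$, which is your endpoint case analysis in disguise), and get $D\ge 1$ from $r(x)=|x|$ in the limit $a/b\to 0^+$. Your explicit treatment of the constant component of $r$ via $\phi(x)=x$ (on which both sides vanish) is a small point that the paper's ``closure of $\mathcal{S}$'' phrasing glosses over, but it does not change the argument.
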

\begin{proof}
For $\phi(x) = xr(x)$, $r(x) = |x|$, by homogeneity, inequality \eqref{eq:2point-C} is equivalent to: for all $0 < a < 1$, we have
\[
D\cdot\left[\frac{(1+a)^2-(1-a)^2}{2a} - \frac{(1+a)^2+(1-a)^2}{2}\right] \geq 1-a,
\]
that is $D\cdot(1-a^2) \geq (1-a)$ for all $0 < a < 1$, which holds if and only if $D \geq 1$. Now we show that in fact \eqref{eq:2point-C} holds with $D=1$ for every $\phi(x) = xr(x)$, where $r \in \mathcal{C}_2$. Since $\mathcal{C}_2$ is the closure of $\mathcal{S}$, by linearity, it suffices to show this for all simple functions $r \in \mathcal{S}$, that is $r(x) = (|x|-\gamma)_+$. By homogeneity, this is equivalent to showing that for all $\gamma \geq 0$ and $0 < a < 1$, we have
\begin{align*}
&\frac{(1+a)(1+a-\gamma)_+-(1-a)(1-a-\gamma)_+}{2a} - \frac{(1+a)(1+a-\gamma)_++(1-a)(1-a-\gamma)_+}{2} \\
&\qquad\geq  (1-\gamma)_+-(a-\gamma)_+.
\end{align*}
Fix $0 < a < 1$. Let $h_a(\gamma)$ be the left hand side minus the right hand side. For $\gamma \geq 1+a$, $h_a(\gamma) = 0$. Since as a function of $\gamma$, $h_a(\gamma)$ is piecewise linear, showing that it is nonnegative on $[0,1+a]$ is equivalent to verifying it at the nodes $\gamma \in \{0, 1, a, 1-a\}$. We have, $h_a(0) = a-a^2 > 0$. Next, $h_a(1) = \frac{(1+a)a}{2a} - \frac{(1+a)a}{2} = \frac{1}{2}(1+a)(1-a) > 0$. Finally, to check $\gamma = a$ and $\gamma = 1-a$, we consider two cases.

\bigskip
\noindent
\emph{Case 1.} $a \leq 1-a$, that is $0< a \leq \frac{1}{2}$. Then,
\[
h_a(a) = \frac{(1+a)-(1-a)(1-2a)}{2a} - \frac{(1+a)+(1-a)(1-2a)}{2} - (1-a) = a(1-a) > 0
\]
and
\[
h_a(1-a) = \frac{(1+a)2a}{2a} - \frac{(1+a)2a}{2}-a = 1-a^2-a \geq 1 - \frac{1}{4} - \frac{1}{2} = \frac{1}{4}.
\]

\bigskip
\noindent
\emph{Case 2.} $a > 1-a$, that is $\frac{1}{2} < a <1$. Then,
\[
h_a(a) = \frac{(1+a)}{2a} - \frac{(1+a)}{2} - (1-a) = \frac{(1-a)^2}{2a} > 0
\]
and
\[
h_a(1-a) = \frac{(1+a)2a}{2a} - \frac{(1+a)2a}{2}-[a-(2a-1)] = a(1-a) > 0.
\]
\end{proof}

\begin{proof}[Proof of Theorem \ref{thm:Schur}]
Fix $p \geq 3$ and let $F(x) = |x|^p$. We would like to show that the function
\[
\Phi(a_1,\ldots,a_n) = \E F\left(\sum_{i=1}^n \sqrt{a_i}X_i\right)
\]
is Schur concave. Since $\Phi$ is symmetric, by Ostrowski's criterion (see, e.g., Theorem II.3.14 in \cite{Bh}), $\Phi$ is Schur concave if and only if
$
\frac{\partial \Phi}{\partial a_1} \geq \frac{\partial \Phi}{\partial a_2}$, $a_1 < a_2$,
which is equivalent to
\[
\frac{1}{\sqrt{a_1}}\E[ X_1F'(S)] \geq \frac{1}{\sqrt{a_2}}\E[X_2 F'(S)],
\]
where $S = \sqrt{a_1}X_1+\sqrt{a_2}X_2 + W$ and $W = \sum_{i>2} \sqrt{a_i}X_i$.
We take the expectation with respect to $X_1$ and $X_2$.
Suppose $\rho_0 < 1$.
Since $F'$ is odd and $W$ is symmetric, we get, $-\E F'(-\sqrt{a_1}+W) = \E F'(\sqrt{a_1}+W)$ and similarly for the other terms that show up. Consequently, the inequality can be equivalently rewritten as
\begin{align*}
&\quad \frac{1}{\sqrt{a_1}}\Bigg(2\rho_0\E F'(\sqrt{a_1}+W) + (1-\rho_0) \E[ F'(\sqrt{a_1}+ \sqrt{a_2} +  W) - F'(-\sqrt{a_1} + \sqrt{a_2} + W) ]\Bigg) \\
&\geq \frac{1}{\sqrt{a_2}}\Bigg(2\rho_0\E F'(\sqrt{a_2}+W) +(1-\rho_0) \E[ F'(\sqrt{a_2}+ \sqrt{a_1} +  W) + F'(\sqrt{a_2} - \sqrt{a_1} + W) ]\Bigg).
\end{align*}
Set $a = \sqrt{a_1}$, $b = \sqrt{a_2}$ and
\[
\phi(x) = \E F'(x+W), \qquad x \in \R
\]
($\phi$ is also odd). Suppose $\rho_0 > 0$. Then, the validity of the above inequality is equivalent to the question whether for all $0 < a < b$,
\begin{equation}\label{eq:phi-2point}
(\rho_0^{-1}-1)\left[\frac{\phi(a+b)-\phi(b-a)}{2a} - \frac{\phi(a+b)+\phi(b-a)}{2b}\right] \geq \left[ \frac{\phi(b)}{b}-\frac{\phi(a)}{a}\right].
\end{equation}
By the symmetry of $W$, it has the same distribution as $\varepsilon |W|$, where $\varepsilon$ is an independent symmetric random sign, so we can write $\phi(x) = \frac{1}{2}\E\phi_{|W|}(x)$, where for $w \geq 0$, we set $\phi_w(x) = F'(x+w) + F'(x-w)$. By Lemmas \ref{lm:r-is-convex} and \ref{lm:2point-C}, inequality \eqref{eq:phi-2point} holds for $\phi_w$ in place of $\phi$ (for every $w \geq 0$) as long as $\rho_0^{-1} - 1 \geq 1$. Taking the expectation against $|W|$ yields the inequality for $\phi$, as desired. For $\rho_0 =0$, we can for instance argue by taking the limit $\rho_0 \to 0+$.
\end{proof}

\subsection{Second, third and higher moments: Proof of Theorem \ref{thm:2-p>3}}

The value of $C_p$ is seen to be sharp by taking $a_1 = \ldots = a_n = \frac{1}{\sqrt{n}}$, letting $n \to \infty$ and applying the central limit theorem.


To establish $N_{p}(a) \leq C_pN_2(a)$, we set
\begin{equation}\label{eq:def-sigma}
\sigma  = \sqrt{\E |X_1|^2} = \left(\frac{(L+1)(2L+1)}{6}\right)^{1/2}
\end{equation}
and let $G_1, G_2, \ldots$ be i.i.d. centred Gaussian random variables with variance $\sigma^2$. Since
\[
C_p^p\left(\E\left|\sum_{i=1}^n a_iX_i \right|^2\right)^{p/2} = C_p^p\left(\sum_{i=1}^n a_i^2\right)^{p/2}\sigma^{p/2} = \E\left|\sum_{i=1}^n a_iG_i \right|^p,
\]
inequality $N_p(a) \leq C_pN_2(a)$ is equivalent to
\[
 \E\left|\sum_{i=1}^n a_iX_i \right|^p \leq  \E\left|\sum_{i=1}^n a_iG_i \right|^p.
\]
By independence and induction, it suffices to show that for every reals $a, b$, we have
\begin{equation}\label{eq:XvsG}
\E|a+bX_1|^p \leq \E|a+bG_1|^p.
\end{equation}
This will follow from the following claim.

\bigskip
\noindent\textbf{Claim.} For every convex nondecreasing function $h\colon [0,+\infty)\to [0,+\infty)$, we have 
\begin{equation}\label{eq:X^2vsG^2}
\E h(X_1^2) \leq \E h(G_1^2).
\end{equation}

\noindent
Indeed, \eqref{eq:XvsG} for $b = 0$ is clear. Assuming $b \neq 0$, by homogeneity, \eqref{eq:XvsG} is equivalent to
\[
\E|a+X_1|^p \leq \E|a+G_1|^p.
\]
Using the symmetry of $X_1$, we can write
\[
2\E|a+X_1|^p = \E|a + |X_1||^p + \E|a-|X_1||^p = \E h_a(X_1^2),
\]
where
\begin{equation}\label{eq:def-h_a}
h_a(x) = |a + \sqrt{x}|^p + |a - \sqrt{x}|^p, \qquad x \geq 0
\end{equation}
(and similarly for $G_1$). The convexity of $h_a$ is established in the following standard lemma (see also e.g. Proposition 3.1 in \cite{FHJSZ}).

\begin{lemma}\label{lm:h_a-convex}
Let $p \geq 3$, $a \in \R$. Then $h_a$ defined in \eqref{eq:def-h_a} is convex nondecreasing on $[0,\infty)$.
\end{lemma}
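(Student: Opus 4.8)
\emph{Proof proposal.} The plan is to reduce the claim to one-variable calculus via the substitution $u = \sqrt{x}$. Since $h_{-a} = h_a$, we may assume $a \geq 0$. Put $f(u) = |a+u|^p + |a-u|^p$ for $u \geq 0$, so that $h_a(x) = f(\sqrt{x})$. As $p \geq 3 > 2$ we have $f \in C^2(\R)$, hence $h_a$ is continuous on $[0,\infty)$ and $C^2$ on $(0,\infty)$, and a direct chain-rule computation gives, for $x > 0$ with $u = \sqrt{x}$,
\[
h_a'(x) = \frac{f'(u)}{2u}, \qquad h_a''(x) = \frac{u f''(u) - f'(u)}{4u^3}.
\]
It thus suffices to show that $f'(u) \geq 0$ for all $u > 0$ (giving that $h_a$ is nondecreasing) and that $\psi(u) := u f''(u) - f'(u) \geq 0$ for all $u > 0$ (giving that $h_a$ is convex on $(0,\infty)$); continuity at $0$ then promotes both conclusions to all of $[0,\infty)$.

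Monotonicity follows by splitting at $u = a$: for $u \geq a$, $f'(u) = p\big[(u+a)^{p-1} + (u-a)^{p-1}\big] \geq 0$, and for $0 < u < a$, $f'(u) = p\big[(u+a)^{p-1} - (a-u)^{p-1}\big] \geq 0$ since $u+a \geq a-u \geq 0$ and $s \mapsto s^{p-1}$ is nondecreasing.

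For convexity the idea is to differentiate $\psi$: where $f'''$ exists (that is, $u \neq a$) one has $\psi'(u) = u f'''(u)$, and
\[
f'''(u) = p(p-1)(p-2)\big[\sgn(u+a)|u+a|^{p-3} + \sgn(u-a)|u-a|^{p-3}\big].
\]
For $u > 0$ this is nonnegative: $p(p-1)(p-2) \geq 0$ as $p \geq 3$, the term with $u+a$ is nonnegative, and when $u < a$ it dominates the negative term with $u-a$ because $u+a \geq a-u$ and the exponent $p-3$ is nonnegative. Hence $\psi$ is continuous on $(0,\infty)$ and nondecreasing on each of $(0,a]$ and $[a,\infty)$, so it is nondecreasing on all of $(0,\infty)$; and since $f'(u) \to 0$ and $u f''(u) \to 0$ as $u \to 0^+$, we get $\psi(0^+) = 0$, whence $\psi \geq 0$ on $(0,\infty)$, as needed.

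The hypothesis $p \geq 3$ is used exactly once, in the sign of $f'''$: for $2 < p < 3$ the exponent $p-3$ is negative, $f'''$ turns negative on $(0,a)$, and the lemma itself fails. The only slightly delicate bookkeeping is the kink of $f$ at $u = a$ when $a > 0$: one should note $f \in C^2$ there (as $p-2 > 0$) so that $\psi$ is genuinely continuous across $u = a$, which is what lets us glue the two monotonicity pieces of $\psi$; the rest is routine.
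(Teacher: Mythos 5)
Your proof is correct and follows essentially the same route as the paper: a one-variable calculus verification after the substitution $u=\sqrt{x}$, with the hypothesis $p\ge 3$ entering through the sign of the third derivative. The paper organizes the same computation slightly differently—writing $h_1'$ as a multiple of the chord slope $\frac{f(y)-f(0)}{y}$ of the convex function $f(y)=|1+y|^{p-1}+\sgn(y-1)|y-1|^{p-1}$ rather than integrating $\psi'(u)=uf'''(u)$—but this is only a difference in bookkeeping, and your handling of the kink at $u=a$ and of the endpoint $u\to 0^+$ is sound.
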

\begin{proof}
The case $a = 0$ is clear (and the assertion holds for $p \geq 2$). The case $a \neq 0$ reduces by homogeneity to, say $a = 1$. We have
\[
h_1'(x) = \frac{p}{2\sqrt{x}}\Big[|1+\sqrt{x}|^{p-1}+\text{sgn}(\sqrt{x}-1)|\sqrt{x}-1|^{p-1}\Big]
\]
and it suffices to show that the function $g(y) = \frac{|1+y|^{p-1}+\text{sgn}(y-1)|y-1|^{p-1}}{y}$ is nondecreasing on $(0,\infty)$. Call the numerator $f(y)$. Since $g(y) = \frac{f(y) - f(0)}{y-0}$, it suffices to show that $f$ is convex $(0,\infty)$. We have $f'(y) = (p-1)(|1+y|^{p-2}+|y-1|^{p-2})$ which is convex on $\R$ for $p \geq 3$, hence nondecreasing on $(0,\infty)$ (as being even). This justifies that $h_1'$ is nondecreasing, hence $h_1$ is convex. Since $h_1'(0) = f'(0) = 2(p-1) > 0$, we get $h_1'(x) \geq h_1'(0) > 0$, so $h_1$ is increasing on $(0,\infty)$.
\end{proof}

Thus $2\E|a+X_1|^p = \E h_a(X_1^2) \leq \E h_a(G_1^2) = 2\E|a+G_1|^p$ by the claim, as desired. It remains to prove the claim.

\begin{proof}[Proof of the claim.]
When $L=1$, the claim follows immediately because $X_1^2 = 1$ and by Jensen's inequality, $\E h(G_1^2) \geq h(\E G_1^2) = h(1) = \E h(X_1^2)$. We shall assume from now on that $L \geq 2$.

By standard approximation arguments, it suffices to show that the claim holds for $h(x) = (x-a)_+$ for every $a > 0$. Here and throughout $x_+ = \max\{x,0\}$. Note that 
\[
\mathbb{E}(X_1^2-a)_+ = \frac{1}{2L}\sum_{k=-L}^L(k^2-a)_+ = \frac{1}{L}\sum_{k = \lceil \sqrt{a} \rceil}^L (k^2-a)
\]
and
\[
\mathbb{E}(G_1^2-a)_+ = \int_{-\infty}^{\infty}(x^2-a)_+\frac{1}{\sqrt{2 \pi \sigma^2}}e^{-x^2/2\sigma^2}\dd x =\sqrt{\frac{2}{\pi \sigma^2}}\int_{\sqrt{a}}^{\infty}(x^2-a)e^{-x^2/2\sigma^2}\dd x
\]
with $\sigma$ (depending on $L$) defined by \eqref{eq:def-sigma}.
Fix an integer $L \geq 2$ and set for nonnegative $a$,
\[
f(a) = \sqrt{\frac{2}{\pi \sigma^2}}\int_{\sqrt{a}}^{\infty}(x^2-a)e^{-x^2/2\sigma^2}\dd x-\frac{1}{L}\sum_{k = \lceil \sqrt{a} \rceil}^L (k^2-a).
\]
Our goal is to show that $f(a) \geq 0$ for every $a \geq 0$. This is clear for $a > L^2$ because then the second term is $0$. Note that $f$ is continuous (because $x \mapsto x_+$ is continuous). For $a \in (b^2, (b+1)^2)$ with $b \in \{0,1,\ldots,L-1\}$ our expression becomes 
\[
f(a) = \sqrt{\frac{2}{\pi \sigma^2}}\int_{\sqrt{a}}^{\infty}(x^2-a)e^{-x^2/2\sigma^2}dx-\frac{1}{L}\sum_{k=b+1}^L (k^2-a),
\]
is differentiable and
\begin{align}\label{eq:f'}
f'(a) &=  -\sqrt{\frac{2}{\pi \sigma^2}}\int_{\sqrt{a}}^{\infty}e^{-x^2/2 \sigma^2}\dd x- \frac{1}{L}\sum_{k = b+1}^L (-1) = -\sqrt{\frac{2}{\pi \sigma^2}}\int_{\sqrt{a}}^{\infty}e^{-x^2/2 \sigma^2}\dd x + \frac{L-b}{L}.
\end{align}
Bounding $b < \sqrt{a}$ yields
\begin{align*}
f'(a) &\geq -\sqrt{\frac{2}{\pi \sigma^2}}\int_{\sqrt{a}}^{\infty}e^{-x^2/2 \sigma^2}\dd x + \frac{L-\sqrt{a}}{L} = -\sqrt{\frac{2}{\pi}}\int_{\sqrt{a}/\sigma}^{\infty}e^{-x^2/2}\dd x + \left(1 -  \frac{\sqrt{a}}{L}\right).
\end{align*}
Let $\tilde g(a)$ denote the right hand side.
We have obtained $f' \geq \tilde g$ on $(0,L^2)$ (except for the points $1^2, 2^2, \ldots$). Since $f$ is absolutely continuous and $f(0) = 0$, we can write $f(a) = \int_0^a f'(x) \dd x$ and consequently
\[
f(a) \geq g(a), \qquad a \in [0,L^2],
\]
where we define
\[
g(a) = \int_0^a \tilde g(x)\dd x.
\]
Note: $g''(a) = \tilde g'(a) = \frac{1}{2\sqrt{a}}\left(\sqrt{\frac{2}{\pi}}\frac{1}{\sigma}e^{-\frac{a}{2\sigma}} - \frac{1}{L}\right)$ which changes sign from positive to negative (since $\sqrt{\frac{2}{\pi}}\frac{1}{\sigma} - \frac{1}{L} > 0$ for $L \geq 2$). This implies that $g'$ is first strictly increasing, then strictly decreasing and together with $g'(0) = \tilde g(0) = 0$, $g'(\infty) = -\infty$, it gives that $g'$ is first positive, then negative. Consequently, $g$ is first strictly increasing and then strictly decreasing. Since $g(0) = 0$, to conclude that $g$ is nonnegative on $[0,L^2]$ (hence $f$), it suffices to check that $g(L^2) \geq 0$. We have,
\begin{align*}
g(L^2) &= \int_0^{L^2}\Bigg[-\sqrt{\frac{2}{\pi}}\int_{\sqrt{a}/\sigma}^{\infty}e^{-x^2/2}\dd x + \left(1 -  \frac{\sqrt{a}}{L}\right)\Bigg] \dd a \\
&= \sqrt{\frac{2}{\pi}}\int_0^{L/\sigma} (L^2-\sigma^2x^2)e^{-x^2/2} \dd x - \frac{2}{3}L^2.
\end{align*}
Note that for $t = t(L) = \frac{L^2}{\sigma^2} = \frac{6L^2}{(L+1)(2L+1)}$, the expression $\frac{g(L^2)}{\sigma^2}$ becomes
\[
h(t) = \sqrt{\frac{2}{\pi}}\int_0^{\sqrt{t}} (t-x^2)e^{-x^2/2} \dd x - \frac{2}{3}t.
\]
We have,
\[
h'(t) = \sqrt{\frac{2}{\pi}}\int_0^{\sqrt{t}} e^{-x^2/2} \dd x - \frac{2}{3}.
\]
For $L \geq 7$, we have $t \geq t_0 = t(7) = \frac{49}{20}$. We check that $h'(t_0) = h'(\frac{49}{20})> 0.2$ and since $h'$ is increasing, $h'(t)$ is positive for $t \geq t_0$, hence $h(t) \geq h(t_0) = h(\frac{49}{20}) > 0.01$ for $t \geq t_0$. Consequently, $g(L^2) > 0$ for every $L \geq 7$, which completes the proof for $L \geq 7$.

It remains to address the cases $2 \leq L \leq 6$. Here lower-bounding $f$ by $g$ incurs too much loss, so we show that $f$ is nonnegative on $[0,L^2]$ by direct computations. First note that $f'(a)$ (see \eqref{eq:f'}) is strictly increasing on each interval $a \in (b^2,(b+1)^2)$, $b \in \{0,1,\ldots, L-1\}$. Clearly $f'(0+) = 0$ and we check that $\theta_{L,b} = f'(b^2+) > 0$ for every $b \in \{1,\ldots,L-2\}$ and $3 \leq L \leq 6$ (see Table \ref{tab:f'}), so $f(a)$ is strictly increasing for $a \in (0,(L-1)^2)$. Since $f(0) = 0$, this shows that $f(a) > 0$ for $a \in (0,(L-1)^2)$. On the interval $((L-1)^2,L^2)$, we use the convexity of $f$ and we lower-bound $f$ by its tangent at $a = (L-1)^2+$ with the slope $\theta_{L,L-1}$ (which is negative), that is $f(a) \geq \theta_{L,L-1}(a - (L-1)^2) + f((L-1)^2)$. It remains to check that $v_L = \theta_{L,L-1}(2L-1) + f((L-1)^2)$, the values of the right hand side at the end point $a = L^2$, are positive. We have, $v_2 > 0.2$, $v_3 > 0.7$, $v_4 > 1.2$, $v_5 > 1.9$, $v_6 > 2.6$. This finishes the proof.
\end{proof}

\begin{table}[!ht]
\begin{center}
\caption{Lower bounds on the values of the slopes $\theta_{L,b} = f'(b^2+)$.}
\label{tab:f'}
\begin{tabular}{r|ccccc}
 & $b=1$ & $b=2$ & $b=3$ & $b=4$ \\\hline
$\theta_{3,b}$ & $0.02$\\
$\theta_{4,b}$ & $0.03$ & $0.03$\\
$\theta_{5,b}$ & $0.03$ & $0.05$ & $0.03$\\
$\theta_{6,b}$ & $0.03$ & $0.05$ & $0.05$ & $0.02$\\
\end{tabular}
\end{center}
\end{table}

\begin{remark}\label{rem:iid-2-p>3}
We can drop the assumption in Theorem \ref{thm:2-p>3} of the $X_i$ being identically distributed and only assume their independence (we stated it in the i.i.d. case for simplicity). The proof does not change: we only have to choose the independent Gaussian random variables $G_i$ to be such that $\E|G_i|^2 = \E|X_i|^2$ and then \eqref{eq:X^2vsG^2}, hence \eqref{eq:XvsG} holds for each $X_i$.
\end{remark}

\subsection{First and second moments: Proof of Theorem \ref{thm:L1-L2}}

For $a_1 = 1$, $a_2 = \dots = a_n = 0$, inequality $N_1(a) \geq c_1N_2(a)$ becomes equality, so the value of the constant $c_1$ is sharp. To prove the inequality, we shall closely follow Haagerup's approach from \cite{Haa}. Note that $Y$ has the same distribution as $\theta\varepsilon R$, where $\theta$ is a Bernoulli random variable with parameter $1-\rho_0$, $\varepsilon$ is a symmetric random sign, $R$ is a positive random variable and $\theta, \varepsilon$ and $R$ are independent (the law of $R$ is the same as the law of $|X|$ conditioned on $X \neq 0$). Let $\phi_Y(t) = \E e^{itY}$ be the characteristic function of $Y$. We have
\begin{align*}
\phi_Y(t) &= \rho_0 + (1-\rho_0)\E \cos(tR) \geq \rho_0 -(1-\rho_0) = 2\rho_0 -1 \geq 0.
\end{align*}
We also define
\[
F(s) = \frac{2}{\pi}\int_0^\infty\left[1 - \left|\phi_Y\left(\frac{t}{\sqrt{s}}\right)\right|^s\right]\frac{dt}{t^2}, \qquad s \geq 1.
\]
By symmetry, without loss of generality we can assume that $a_1, \ldots, a_n$ are positive with $\sum a_j^2 = 1$. By Lemma 1.2 from \cite{Haa} and independence,
\begin{align*}
N_1(a) = \E\left|\sum_j a_j Y_j\right| &= \frac{2}{\pi}\int_0^\infty \left[ 1 - \prod_j \phi_Y(a_jt) \right] \frac{dt}{t^2}.
\end{align*}
By the AM-GM inequality, 
$
\prod \phi_Y(a_jt) \leq \sum a_j^{2}|\phi_Y(a_jt)|^{a_j^{-2}},
$
thus
$
N_1(a) \geq \sum_j a_j^2F(a_j^{-2}).
$
If we show that
\begin{equation}\label{eq:F>F(1)}
F(s) \geq F(1), \qquad s \geq 1,
\end{equation}
then
\[
N_1(a) \geq \sum_j a_j^2F(1) = F(1) = \frac{F(1)}{\sqrt{\E |Y|^2}}N_2(a).
\]
Since $\phi_Y$ is nonnegative, using again Lemma 1.2 from \cite{Haa}, we have 
\[
F(1) = \frac{2}{\pi}\int_0^\infty\left[1 - \left|\phi_Y\left(t\right)\right|\right]\frac{dt}{t^2} = \frac{2}{\pi}\int_0^\infty\left[1 - \phi_Y\left(t\right)\right]\frac{dt}{t^2} = \E|Y|,
\]
so the proof of $N_1(a) \geq c_1N_2(a)$ with $c_1 = \|Y\|_1/\|Y\|_2$ is finished. 

It remains to show \eqref{eq:F>F(1)}. For a fixed $s \geq 1$, the left hand side
\[
F(s) = \frac{2}{\pi}\int_0^\infty\left[1 - \left|\rho_0 + (1-\rho_0)\E\cos\left(\frac{tR}{\sqrt{s}}\right)\right|^s\right]\frac{dt}{t^2}
\]
is concave as a function of $\rho_0$, whereas the right hand side $F(1) = \E|Y| = (1-\rho_0)\E R$ is linear as a function of $\rho_0$. Therefore, it is enough to check the cases: 1) $\rho_0 = 1$ which is clear, 2) $\rho_0 = 1/2$ which becomes
\[
\frac{2}{\pi}\int_0^\infty\left[1 - \left|\frac{1}{2} + \frac{1}{2}\E \cos\left(\frac{tR}{\sqrt{s}}\right)\right|^s\right]\frac{dt}{t^2} \geq \frac{1}{2}\E R.
\]
Using $\frac{\cos x +1}{2} = \cos^2(x/2)$ and then employing convexity, the left hand side can be rewritten and lower bounded as follows
\begin{align*}
\frac{2}{\pi}\int_0^\infty\left[1 - \left|\E \cos^2\left(\frac{tR}{2\sqrt{s}}\right)\right|^s\right]\frac{dt}{t^2} \geq \E\frac{2}{\pi}\int_0^\infty\left[1 - \left|\cos\left(\frac{tR}{2\sqrt{s}}\right)\right|^{2s}\right]\frac{dt}{t^2}.
\end{align*}
A change of variables $t = \sqrt{2}t'/R$ allows to write the right hand side as
\begin{align*}
\E\left[\frac{2}{\pi}\int_0^\infty\left[1 - \left|\cos\left(\frac{t'}{\sqrt{2s}}\right)\right|^{2s}\right]\frac{dt'}{t'^2}\frac{R}{\sqrt{2}}\right] = \frac{\E R}{\sqrt{2}}F_{\text{Haa}}(2s),
\end{align*}
where $F_{\text{Haa}}(s) = \frac{2}{\pi}\int_0^\infty\left[1 - \left|\cos\left(\frac{t}{\sqrt{s}}\right)\right|^{s}\right]\frac{dt}{t^2}$ is Haagerup's function (see Lemma 1.3 and 1.4 in \cite{Haa}). He showed therein that it is increasing, so for $s \geq 1$, we get $F_{\text{Haa}}(2s) \geq F_{\text{Haa}}(2) = \frac{1}{\sqrt{2}}$ and this finishes the proof.

\begin{remark}\label{rem:L1-L2-otherp}
Thanks to Remark 2.5 from \cite{Haa}, the same proof also works if we replace the first moment by $p_0$-th one, where $p_0 = 1.847...$ is the unique solution to $\Gamma(\frac{p+1}{2}) = \frac{\sqrt{\pi}}{2}$, $p \in (0,2)$. The cases of other values of $p \in (1,2)$ have been elusive. 
\end{remark}


\section{Necessity of the restrictions on $\rho_0$}\label{sec:rho0}

We use the notation from \eqref{eq:def-X} and \eqref{eq:def-N}. We derive some  necessary conditions on $\rho_0$, justifying to some extent our restrictions on $\rho_0$ made in Theorems \ref{thm:Schur}, \ref{thm:2-p>3} and \ref{thm:L1-L2}.

\begin{remark}\label{rem:rho-23}
For Theorem \ref{thm:Schur} to hold, we necessarily have $\frac{\dd}{\dd \lambda}N_3^3(\sqrt{\lambda},\sqrt{1-\lambda}) \geq 0$ for $\lambda \in (0,\frac{1}{2})$. Letting $\lambda \to 0+$ yields $(1-\rho_0)(1 - 2\rho_0) \geq 0$, hence $\rho_0 \leq \frac{1}{2}$.
\end{remark}

\begin{remark}
In Theorem \ref{thm:2-p>3}, a necessary condition on $\rho_0$ is $\rho_0 \leq 1 - \frac{27\pi}{128} = 0.33732..$. This follows from $N_3(1) \leq \|G\|_3N_2(1)$ with $L \to \infty$.
\end{remark}

\begin{remark}
In Theorem \ref{thm:L1-L2}, a necessary condition on $\rho_0$ is $\rho_0 \geq \sqrt{2}-1$. This follows from $N_1(1,1) \geq c_1N_2(1,1)$ applied to $Y = X$, which is equivalent to $(1-\rho_0)\frac{2L+1}{3L} \geq 2-\sqrt{2}$, so $L=1$ gives $\rho_0 \geq \sqrt{2}-1$.  
\end{remark}

Thus the restriction in Theorem \ref{thm:Schur} is sharp, while those in Theorems \ref{thm:2-p>3}, \ref{thm:L1-L2} are by-products of our proofs and can perhaps be improved. We believe the optimal ones are indicated above (for the following reasons: one can check that the case $n=2$ and $Y=X$ of Theorem \ref{thm:L1-L2} holds for $\rho_0 \in [\sqrt{2}-1,1)$; moreover, in the context of Theorem \ref{thm:2-p>3}, $N_4(1) \leq \|G\|_4N_2(1)$ with $L\to\infty$ is a sufficient condition for $N_q(a) \leq \frac{\|G\|_q}{\|G\|_p}N_p(a)$ to hold for all $2 \leq p < q$ even integers -- see Remark \ref{rem:even}).



\begin{thebibliography}{9}


\bibitem{AH}
Averkamp, R., Houdr\'e, C., Wavelet thresholding for non-necessarily Gaussian noise: Idealism. \emph{Ann. Statist.} 31 (2003), 110--151. 


\bibitem{BC}
Baernstein, A., II, Culverhouse, Robert C.,
Majorization of sequences, sharp vector Khinchin inequalities, and bisubharmonic functions. 
\emph{Studia Math.} 152 (2002), no. 3, 231--248. 




\bibitem{Ball}
Ball, K.,
Mahler's conjecture and wavelets. 
\emph{Discrete Comput. Geom.} 13 (1995), no. 3-4, 271--277. 



\bibitem{BGMN}
Barthe, F., Gu\'edon, O., Mendelson, S., Naor, A.,
A probabilistic approach to the geometry of the $\ell_p^n$-ball. 
\emph{Ann. Probab.} 33 (2005), no. 2, 480--513. 



\bibitem{BN}
Barthe, F., Naor, A.,
Hyperplane projections of the unit ball of $\ell_p^n$. 
\emph{Discrete Comput. Geom.} 27 (2002), no. 2, 215--226. 

\bibitem{Bh}
Bhatia, R.,
Matrix analysis. 
Graduate Texts in Mathematics, 169. \emph{Springer-Verlag, New York}, 1997.






\bibitem{Eat}
Eaton, M. L.,
A note on symmetric Bernoulli random variables.
\emph{Ann. Math. Statist.} 41 (1970), 1223–-1226. 


\bibitem{ENT1}
Eskenazis, A., Nayar, P., Tkocz, T.,
Gaussian mixtures: entropy and geometric inequalities, \emph{Ann. of Prob.} 46(5) 2018, 2908--2945.


\bibitem{ENT2}
Eskenazis, A., Nayar, P., Tkocz, T.,
Sharp comparison of moments and the log-concave moment problem, \emph{Adv. Math.} 334 (2018) 389--416.


\bibitem{FHJSZ}
Figiel, T., Hitczenko, P., Johnson, W. B., Schechtman, G., Zinn, J.,
Extremal properties of Rademacher functions with applications to the Khintchine and Rosenthal inequalities. 
\emph{Trans. Amer. Math. Soc.} 349 (1997), no. 3, 997--1027. 


\bibitem{Haa}
Haagerup, U.,
The best constants in the Khintchine inequality.
\emph{Studia Math.} 70 (1981), no. 3, 231--283.

\bibitem{HLP}
Hardy, G. H., Littlewood, J. E., P\'olya, G.,
Inequalities.
2nd ed. \emph{Cambridge, at the University Press}, 1952.

\bibitem{Khi}
Khintchine, A.,
\"Uber dyadische Br\"uche. 
\emph{Math. Z.} 18 (1923), no. 1, 109--116. 


\bibitem{Kom}
Komorowski, R.,
On the best possible constants in the Khintchine inequality for $p\geq3$.
\emph{Bull. London Math. Soc.} 20 (1988), no. 1, 73-–75. 


\bibitem{Kon}
K\"onig, H.,
On the best constants in the Khintchine inequality for Steinhaus variables.
\emph{Israel J. Math.} 203 (2014), no. 1, 23--57. 


\bibitem{KK}
K\"onig, H., Kwapie\'n, S.,
Best Khintchine type inequalities for sums of independent, rotationally invariant random vectors.
\emph{Positivity} 5 (2001), no. 2, 115--152.

\bibitem{KLO}
Kwapie\'n, S., Lata\l a, R., Oleszkiewicz, K.,
Comparison of moments of sums of independent random variables and differential inequalities.
\emph{J. Funct. Anal.} 136 (1996), no. 1, 258--268. 

\bibitem{Lat-mom}
Lata\l a, R.,
Estimation of moments of sums of independent real random variables. 
\emph{Ann. Probab.}, 25(3):1502--1513, 1997.


\bibitem{LO-best}
Lata\l a, R., Oleszkiewicz, K.,
On the best constant in the Khinchin-Kahane inequality.
\emph{Studia Math.} 109 (1994), no. 1, 101--104. 


\bibitem{LO}
Lata\l a, R., Oleszkiewicz, K.,
A note on sums of independent uniformly distributed random variables. 
\emph{Colloq. Math.} 68 (1995), no. 2, 197--206. 


\bibitem{LT}
Ledoux, M., Talagrand, M., Probability in Banach spaces. Isoperimetry and processes. \emph{Springer-Verlag}, Berlin, 1991.

\bibitem{Lit}
Littlewood, J. E., On a certain bilinear form, \emph{Quart. J. Math.} Oxford Ser. 1 (1930), 164--174.


\bibitem{MSch}
Milman, V., Schechtman, G., Asymptotic theory of finite-dimensional normed spaces. With an appendix by M. Gromov. Lecture Notes in Mathematics, 1200. \emph{Springer-Verlag}, Berlin, 1986.

\bibitem{Mo}
Mordhorst, O.,
The optimal constants in Khintchine's inequality for the case $2<p<3$.
\emph{Colloq. Math.} 147 (2017), no. 2, 203-–216. 

\bibitem{NO}
Nayar, P., Oleszkiewicz, K.,
Khinchine type inequalities with optimal constants via ultra log-concavity.
\emph{Positivity} 16 (2012), no. 2, 359--371. 

\bibitem{NP}
Nazarov, F. L., Podkorytov, A. N.,
Ball, Haagerup, and distribution functions.
\emph{Complex analysis, operators, and related topics}, 247--267, 
\emph{Oper. Theory Adv. Appl.}, 113, Birkh\"auser, Basel, 2000. 

\bibitem{New1}
Newman, C. M.,
An extension of Khintchine's inequality.
\emph{Bull. Amer. Math. Soc.} 81 (1975), no. 5, 913--915.

\bibitem{New2}
Newman, C. M.,
Inequalities for Ising models and field theories which obey the Lee-Yang theorem.
\emph{Comm. Math. Phys.} 41 (1975), 1--9.

\bibitem{koles}
Oleszkiewicz, K.,
Comparison of moments via Poincar\'e-type inequality.
\emph{Advances in stochastic inequalities (Atlanta, GA, 1997)}, 135--148, 
Contemp. Math., 234, \emph{Amer. Math. Soc., Providence, RI}, 1999. 


\bibitem{koles-b}
Oleszkiewicz, K.,
Precise moment and tail bounds for Rademacher sums in terms of weak parameters. 
\emph{Israel J. Math.} 203 (2014), no. 1, 429--443. 


\bibitem{PS}
Pass, B., Spektor, S.,
On Khintchine type inequalities for k-wise independent Rademacher random variables.
\emph{Statist. Probab. Lett.} 132 (2018), 35--39.

\bibitem{Pin}
Pinelis, I.,
Extremal probabilistic problems and Hotelling's T2 test under a symmetry condition. 
\emph{Ann. Statist.} 22 (1994), no. 1, 357--368.


\bibitem{Spe}
Spektor, S.,
Restricted Khinchine inequality. 
\emph{Canad. Math. Bull.} 59 (2016), no. 1, 204--210.

\bibitem{Sz}
Szarek, S.,
On the best constant in the Khintchine inequality.
\emph{Stud. Math.} 58, 197--208 (1976)

\bibitem{Tom}
Tomaszewski, B.,
A simple and elementary proof of the Kchintchine inequality with the best constant. \emph{Bull. Sci. Math.} (2) 111 (1987), no. 1, 103--109.

\bibitem{Whi}
Whittle, P.,
Bounds for the moments of linear and quadratic forms in independent random variables. \emph{Theory Probab. Appl.} 5, 302--305 (1960)

\bibitem{Y}
Young, R. M. G., On the best possible constants in the Khintchine inequality. \emph{J. London Math. Soc.} (2) 14 (1976), no. 3, 496--504.


\end{thebibliography}
\end{document}